\newtheorem{defi}{Definition}
\newtheorem{teo}[defi]{Theorem}
\newtheorem{cor}[defi]{Corollary}
\newtheorem{os}[defi]{Remark}
\title{Tutte polynomial of a small-world Farey graph}
\shorttitle{Tutte polynomial of a small-world Farey graph} 
\author{Yunhua Liao, Yaoping Hou \and Xiaoling Shen}
\institute{Department of Mathematics,
Hunan Normal University, Changsha, Hunan 410081, China
}
\pacs{89.75.Hc}{Networks and genealogical trees}
\pacs{02.10.Ox}{Combinatorics; graph theory}
\pacs{05.50.+q}{Lattice theory and statistics (Ising, Potts, ets.)}
\abstract{In this paper, we find recursive formulas for the Tutte polynomial of a family of small-world networks: Farey graphs, which are modular and have an exponential degree hierarchy. Then, making use of these formulas, we determine the number of  spanning trees, as well as the number of connected spanning subgraphs. Furthermore, we also derive exact expressions for the chromatic polynomial and the reliability polynomial of these graphs.}
\begin{document}

\maketitle
\section{Introduction}
The Tutte polynomial of a graph, also known as the partition function of the Potts model \cite{Potts52,Wu82}, is a renown tool for analyzing properties of graphs and networks. The two-variable polynomial, due to W.T. Tutte \cite{Tutte54,Tutte67}, plays an important role in several areas of sciences, for instance, combinatorics, statistical mechanics and biology. In a strong sense it contains every graphical invariant that can be computed by deleting and contraction operations which are natural reductions for many networks model. The Tutte polynomial $T(G;x,y)$ can be evaluated at particular points $(x,y)$ to give numerical graph invariants, including the number of spanning trees, the number of connected spanning subgraphs, the dimension of the bicycle space and many more. The Tutte polynomial also specializes to a variety of single-variable graphical polynomials, including the chromatic polynomial, the reliability polynomial and the flow polynomial.
Furthermore, the Tutte polynomial has  been widely studied in the field of statistical physics where it appears as the partition function of the Potts model $Z_G(q,v)$.  In fact, if $G$ is a graph on $n$ vertices with $k$ connected components, then
\begin{equation*}
T(G;x,y)=(x-1)^{-k}(y-1)^{-n}Z_G((x-1)(y-1),(y-1))
\end{equation*}
and so the partition function of the  Potts model is simply the Tutte polynomial expressed in different variables.
 For overviews, see Refs.~\cite{Welsh00,Ellis11a,Ellis11b}.  However, there are no widely available effective computational tools to compute the Tutte polynomial of a general graph of reasonable size.

In this paper, we follow a combinatorial approach and use the self-similar structure to investigate the Tutte polynomial of a family of recursive graphs which is called Farey graphs. The Farey graph was first introduced by Matula and Kornerup in 1979 and further studied by Colbourn in 1982. Recently, this graph was used as a deterministic network model by Zhang and Comellas~\cite{Zhang11,Zhang12}. This network model exhibits some remarkable properties of real networks, it is small-world with its average distance increasing logarithmically with its vertex number, and its clustering coefficient converges to a large constant $\ln 2$~\cite{Zhang11}. The Farey graph also has many interesting graphical properties, such as it is minimally 3-colorable, uniquely Hamiltonian, maximally outerplanar and perfect, see \cite{Matula79,Colbourn82,Biggs88}. It is worth mentioning that the Potts model on recursively defined graphs was studied first by Dhar \cite{Dhar77} and some related work can be found in Refs.~\cite{Alvarez12,Simoi09}.

By analyzing all the spanning subgraphs of this family of graphs, we give recursive formulas for the Tutte polynomial (see Theorem~\ref{th:1} and Corollary~\ref{C1}).
In particular, as special cases of the general Tutte polynomial, we get:
\begin{itemize}
\item The number of spanning trees (see equation~\ref{d11});
\item The number of connected spanning subgraphs (see equation~\ref{d8});
\item The chromatic polynomial $P(G;\lambda)$ (see equation~\ref{d13});
\item The reliability polynomial $R(G;p)$ (see equation~\ref{d12}).
\end{itemize}
\section{Preliminaries}\label{Section preliminare}
In this section we briefly discuss some necessary background that will be used for our calculations. We use standard graph terminology and the words ``network'' and ``graph'' indistinctly.
\subsection{Tutte polynomial}
For a graph $G$ we denote by $V(G)$ its set of vertices and by $E(G)$ its set of edges. Let $k(G)$ be the number of connected components of the graph $G$. There is a useful relation that expresses the Tutte polynomial $T(G;x,y)$ as a sum of contributions from spanning subgraphs of $G$. Here, a spanning subgraph $H=(V(H),E(H))$ has the same vertex set as $G$ and a subset of $E(G)$, $E(H)\subseteq E(G)$. This relation is
\begin{equation}\label{a1}
T(G;x,y)= \sum_{H\subseteq G}(x-1)^{r(G)-r(H)}(y-1)^{n(H)},
\end{equation}
where $r(H)=|V(H)|-k(H)$ is the rank of $H$ and $n(H)=|E(H)|-r(H)$ is the nullity of $H$. Recall that a one-point join $G*H$ of two graphs $G$ and $H$ is formed by identifying a vertex $u$ of $G$ and a vertex $v$ of $H$ into a single vertex $w$ of $G*H$. Using eq.~(\ref{a1}), it is easy to show that the Tutte polynomial fulfills the following property:
\begin{equation}\label{P1}
T(G*H;x,y)=T(G;x,y)T(H;x,y).
\end{equation}
We will refer to this equality as Property~\ref{P1} hereafter.
In the sequel of the paper, we will be interested in special evaluations of the Tutte polynomial at some particular points $(x,y)$, that allow us to deduce many combinatorial and algebraic properties of the graphs considered. We first recall some definitions. A spanning forest of a graph $G$ is a spanning subgraph of $G$ which is a forest. A connected spanning subgraph of a graph $G$ is a spanning subgraph of $G$ that is connected. The special evaluations of interest are (i) $T(G;1,1)=N_{ST}(G)$, the number of spanning trees of $G$; (ii) $T(G;2,1)=N_{SF}(G)$, the number of spanning forests of $G$; (iii) $T(G;1,2)=N_{CSSG}(G)$, the number of connected spanning subgraphs of $G$ \cite{Ellis11a}.

The Tutte polynomial also contains several other graphical polynomials as partial evaluations, such as the reliability polynomial, the flow polynomial and the chromatic polynomial. For a connected graph $G=(V,E)$, let $R(G;p)$ be the reliability polynomial of $G$. Considering a related graph $H$ obtained
from $G$ by going through the full edge set of $G$ and, for each edge, randomly
retaining it with probability $p$ (thus deleting it with probability $1-p$),
where $0 \le p \le 1$.  The reliability polynomial $R(G;p)$
gives the resultant probability that any two vertices in $H$ are connected,
i.e., that for any two such vertices, there is a path between them consisting
of a sequence of connected edges of $G$ \cite{Ellis11a}. The chromatic polynomial $P(G;\lambda)$ of a graph $G$ is an important specialization of the Tutte polynomial, which counts the number of ways of coloring the vertices of $G$ subject to the constraint that no adjacent pairs of vertices have the same color \cite{Read68,Whitney32}. In a different form, the chromatic polynomials arise in statistical physics as a special limiting case, namely the zero-temperature limit of the anti-ferromagnetic Potts model, and in that context their complex roots are of particular interest \cite{Chang03,Salas01}. Their connections with the Tutte polynomial are given by the following equations.
\begin{eqnarray}
R(G;p)&=&q^{n(G)}p^{r(G)}T(G;1,q^{-1}),\label{a3}\\
P(G;\lambda)&=&(-1)^{r(G)}\lambda ^{k(G)}T(G;1-\lambda,0),\label{a4}
\end{eqnarray}
where $q=1-p$, $n(G)$ and $r(G)$ are the nullity and the rank of $G$.
\subsection{Farey graphs}
The Farey graph \cite{Matula79,Colbourn82,Zhang11,Zhang12} is derived from the famous Farey sequence \cite{Hardy79} and this graph can be created in the following recursive way. Let  $G_n$ denote the Farey graph after $n$ $(n \geqslant 0)$ generations. There are two special vertices in $G_n$ which are represented by $X_n$ and $Y_n$, respectively. Initially $n=0$, $G_0$ is an edge connecting two special vertices. For $n \geqslant 1$, $G_n$ can be obtained by joining two copies of $G_{n-1}$ which are labeled by $G_{n-1}^1$ and $G_{n-1}^2$. The special vertices of $G_{n-1}^i$ are represented respectively by $X_{n-1}^i$ and $Y_{n-1}^i$, for $i=1,2$. In the constructing process, $X_{n-1}^2$ and $Y_{n-1}^1$ are identified as a new vertex $Z_n$ of $G_n$, $X_{n-1}^1$ and $Y_{n-1}^2$ are connected by a new edge $e_n$. Moreover, $X_{n-1}^1$ and $Y_{n-1}^2$ become respectively $X_n$ and $Y_n$, see fig.~\ref{F1}.
\vspace{1cm}
\begin{figure}[h]
\begin{center}
\includegraphics[width=0.80\linewidth]{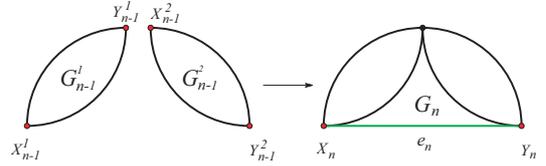}
\end{center}
\caption{Recursive construction of Farey graphs that highlights their
self-similarity}.
\label{F1}
\end{figure}
\begin{figure}[h]
\begin{center}
\includegraphics[width=0.60\linewidth]{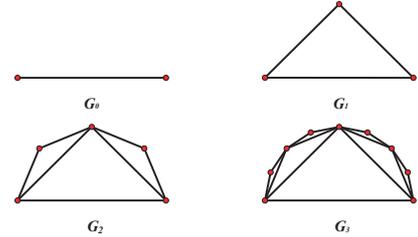}
\end{center}
\caption{Farey graphs $G_0$ to $G_3$}.
\label{12}
\end{figure}

It is easy to prove by induction that the order and size of the Farey graph $G_n=(V_n,E_n)$ are, respectively,
\begin{equation}\label{b1}
|V_n|=2^{n}+1, \quad |E_n|=2^{n+1}-1.
\end{equation}
\begin{os}
There is another definition of the Farey graph given in \cite{Zhang11} as Definition 2.1 which seems simpler and easier to imagine. In fact, the Farey graph $G_n=(V_n,E_n)$ can be constructed in the following iterative way:
For $n=0$, $G_0$ has two vertices and an edge joining them.
For $n \geqslant 1$, $G_n$ is obtained from $G_{n-1}$ by adding to every edge introduced at step $n-1$ a new vertex adjacent to the endvertices of this edge, see fig.~\ref{12}.
\end{os}

\section{The Tutte polynomial of the Farey graph $G_n$ }
First of all, we define the following partition of the set of spanning subgraphs of $G_n$:
\begin{itemize}
  \item $G_{1,n}$ denotes the set of spanning subgraphs of $G_n$, where the special vertices of $G_n$ belong to the same connected component;
  \item $G_{2,n}$ denotes the set of spanning subgraphs of $G_n$, where the special vertices of $G_n$ belong to two different  connected components.
\end{itemize}
 \begin{figure}[htbp]
 \centering
 \includegraphics[width=0.2\textwidth]{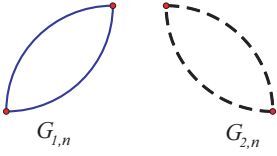}
 \caption{Schematic illustration of the two types of spanning subgraphs $G_{1,n}$, $G_{2,n}$ derived from $G_n$. In this figure, red vertices represent the special vertices of $G_n$ and two special vertices joined by two blue lines belong to the same connected component. Dotted lines means that the two special vertices belong to different connected components}.
 \label{F2}
 \end{figure}
Fig.~\ref{F2} illustrates schematically these two types of spanning subgraphs.
\begin{table*}[!t]
\centering
\caption{All combinations and corresponding contributions}
\label{T1}
\begin{tabular}{ccccc|ccccc}
\toprule[1pt]
$H_{n-1}^a*H_{n-1}^b$& $C$& $H_n$& Contribution&Graphic& $H_{n-1}^a*H_{n-1}^b$& $C$& $H_n$& Contribution& \\
\midrule
($G_{1,n-1},G_{1,n-1}$)&$\{e_{n}\}$&$G_{1,n}$&$(y-1)T_{1,n-1}^2$&\includegraphics[scale=0.1]{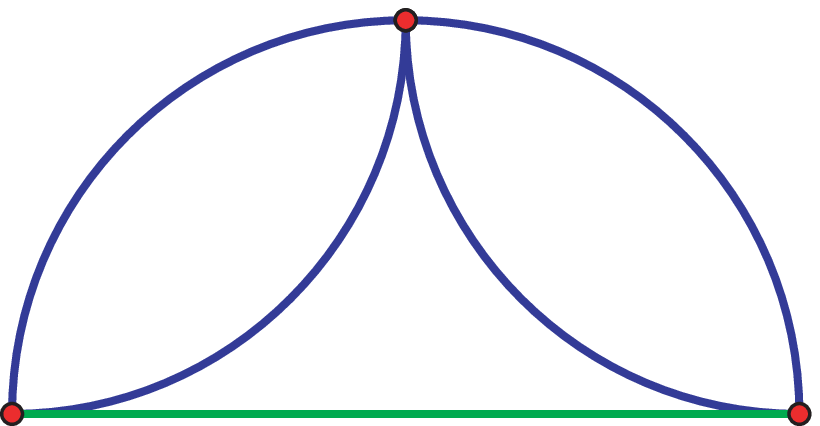}&($G_{1,n-1},G_{1,n-1}$)&$\emptyset$&$G_{1,n}$&$T_{1,n-1}^2$&\includegraphics[scale=0.1]{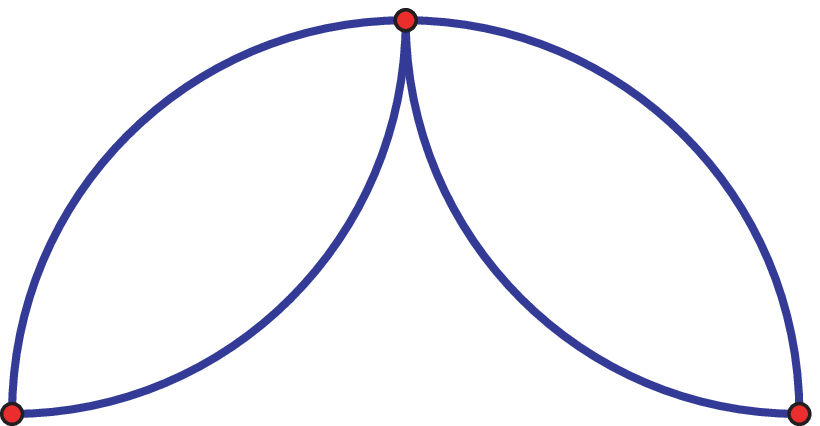}\\
($G_{1,n-1},G_{2,n-1}$)&$\{e_{n}\}$&$G_{1,n}$&$\frac{1}{x-1}T_{1,n-1}T_{2,n}$&\includegraphics[scale=0.1]{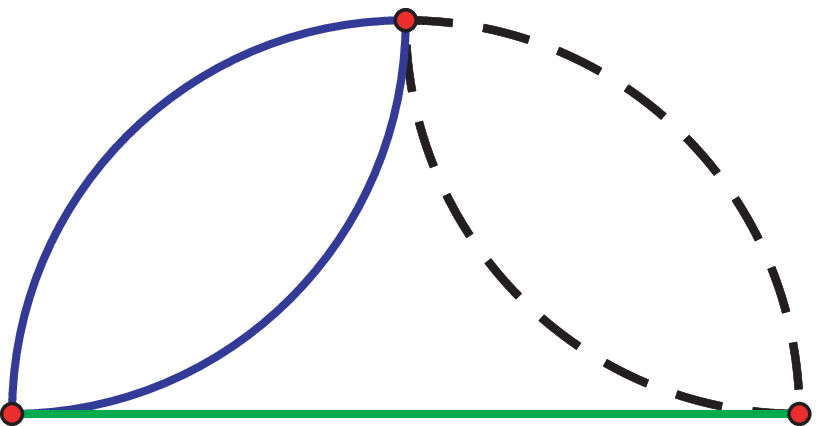}&($G_{1,n-1},G_{2,n-1}$)&$\emptyset$&$G_{2,n}$&$T_{1,n-1}T_{2,n-1}$&\includegraphics[scale=0.1]{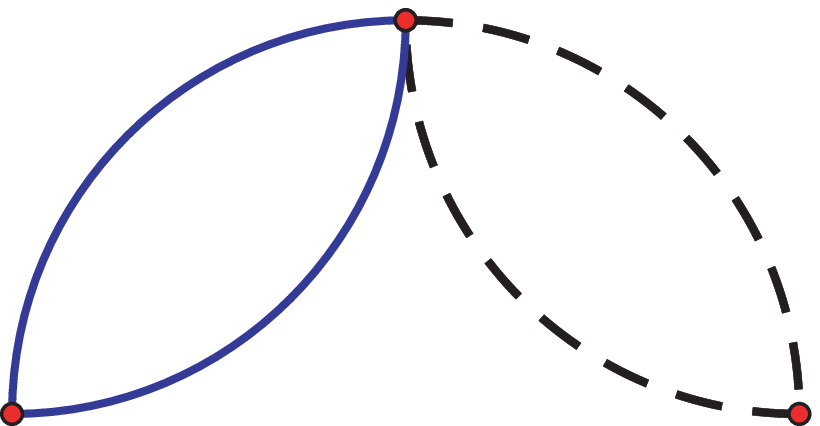}\\
($G_{2,n-1},G_{1,n-1}$)&$\{e_{n}\}$&$G_{1,n}$&$\frac{1}{x-1}T_{1,n-1}T_{2,n}$&\includegraphics[scale=0.1]{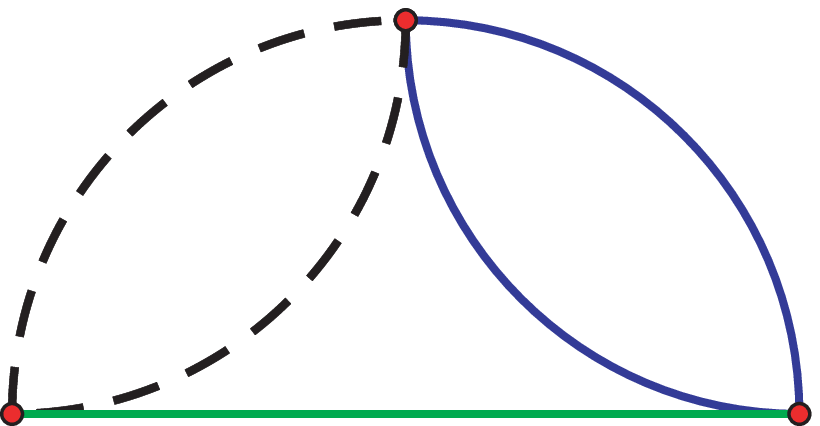}&($G_{2,n-1},G_{1,n-1}$)&$\emptyset$&$G_{2,n}$&$T_{1,n-1}T_{2,n-1}$&\includegraphics[scale=0.1]{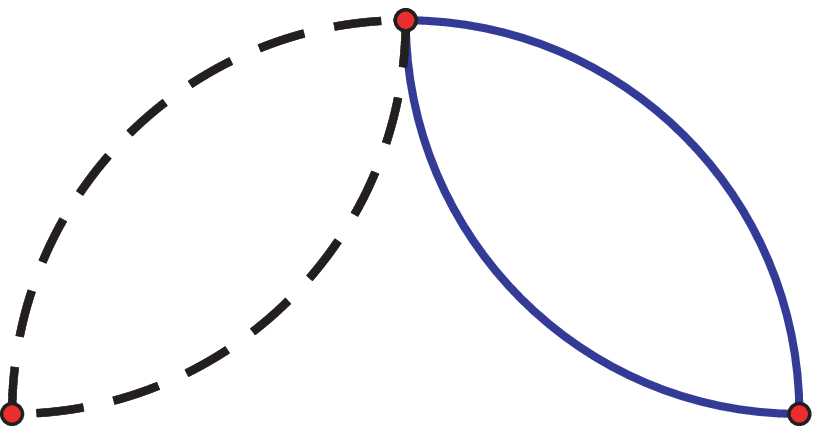}\\
($G_{2,n-1},G_{2,n-1}$)&$\{e_{n}\}$&$G_{1,n}$&$\frac{1}{x-1}T_{2,n-1}^2$&\includegraphics[scale=0.1]{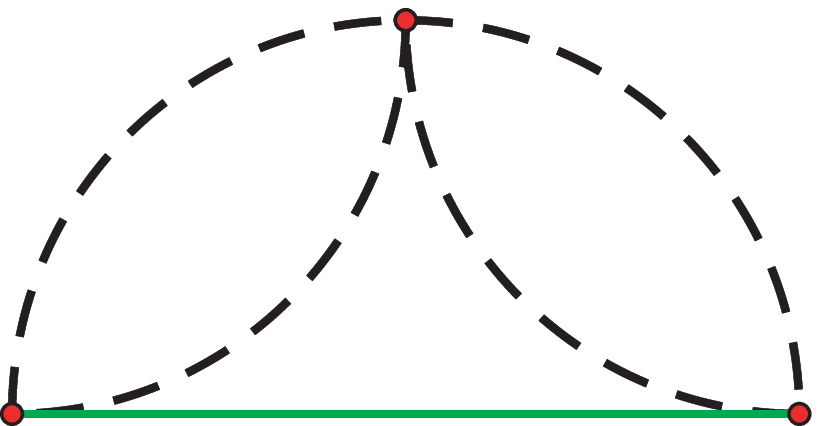}&($G_{2,n-1},G_{2,n-1}$)&$\emptyset$&$G_{2,n}$&$T_{2,n-1}^2$&\includegraphics[scale=0.1]{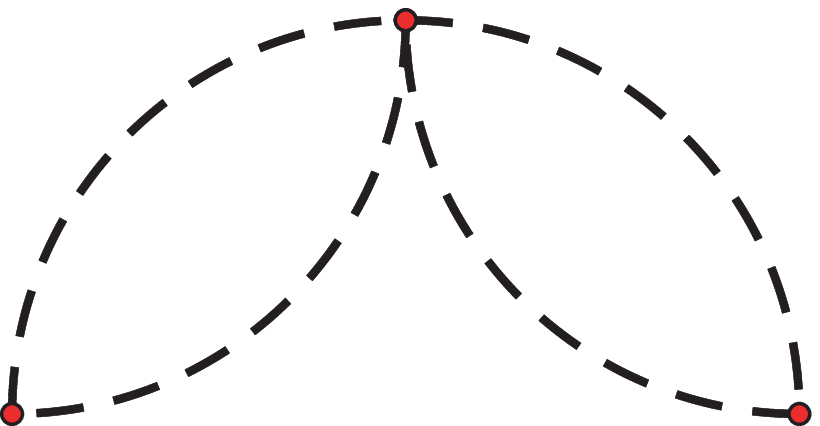}\\
\bottomrule[1pt]
\end{tabular}
\end{table*}

Observe that, for each $n \geqslant 0$, we have the partition $G_{1,n} \cup G_{2,n}$
of the set of spanning subgraphs of  $G_n$. Next let us simply denote by $T_n(x,y)$ the Tutte polynomial $T(G_n;x,y)$ and define, for every $n \geqslant 1$, the following polynomials:
\begin{itemize}
  \item $T_{1,n}(x,y)= \sum_{H \in G_{1,n}}(x-1)^{r(G_n)-r(H)}(y-1)^{n(H)}$;
  \item $T_{2,n}(x,y)=\sum_{H \in G_{2,n}}(x-1)^{r(G_n)-r(H)}(y-1)^{n(H)}$.
\end{itemize}
We have
\begin{equation}\label{c1}
T_n(x,y)=T_{1,n}(x,y)+T_{2,n}(x,y).
\end{equation}

In order to give a recursive formula for $T_n(x,y)$, we provide recursive formulas for $T_{1,n}$ and $T_{2,n}$, respectively.

 From the construction of $G_n$, it is not difficult to observe that any spanning subgraph of $G_{n}$ can be obtained from two spanning subgraphs of $G_{n-1}$, with  possibly an edge. Let $H_{n-1}^a$ , $H_{n-1}^b$ be two spanning subgraphs of $G_{n-1}$. We denote $H_{n-1}^a*H_{n-1}^b$ as the graph obtained from $H_{n-1}^a$ and $H_{n-1}^b$ by identifying the vertex $Y_{n-1}$ of $H_{n-1}^a$ and the vertex $X_{n-1}$ of $H_{n-1}^b$. Let $H_n=H_{n-1}^a*H_{n-1}^b \cup C$, where $C$ may be $\{e_n\}$ or an empty set $\emptyset$. Thus $H_n$ is a spanning subgraph of $G_n$. Let $(G_{i,n-1},G_{j,n-1})$ be the set of $H_{n-1}^a*H_{n-1}^b$  with $H_{n-1}^a \in G_{i,n-1}$ and  $H_{n-1}^b \in G_{j,n-1}$, for $i, j=1, 2$.

\begin{teo}
\label{th:1}
For each $n \geqslant 1$, the Tutte polynomial $T_n(x,y)$ of $G_n$
is given by
$$
T_n(x,y)=T_{1,n}(x,y)+T_{2,n}(x,y),
$$
where the polynomials $T_{1,n}(x,y)$, $T_{2,n}(x,y)$ satisfy the following recursive
relations:
\begin{eqnarray*}
T_{1,n}(x,y)&=& yT_{1,n-1}^2 +\frac{2}{x-1}T_{1,n-1}T_{2,n-1}+\frac{1}{x-1}T_{2,n-1}^2, \\
T_{2,n}(x,y)&=& T_{2,n-1}^2+2T_{1,n-1}T_{2,n-1},
\end{eqnarray*}
with initial conditions
\begin{equation*}
T_{1,0}(x,y)=1,\quad T_{2,0}(x,y)=x-1.
\end{equation*}
\end{teo}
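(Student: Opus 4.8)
The plan is to exploit the recursive construction of $G_n$ together with the subgraph expansion~(\ref{a1}). The starting point is the observation recorded just before the theorem: every spanning subgraph $H_n$ of $G_n$ is uniquely determined by a triple $(H_{n-1}^a,H_{n-1}^b,C)$, consisting of a spanning subgraph $H_{n-1}^a$ of the first copy $G_{n-1}^1$, a spanning subgraph $H_{n-1}^b$ of the second copy $G_{n-1}^2$, and a choice $C\in\{\emptyset,\{e_n\}\}$ of whether the new edge is present. Since the two copies share only the vertex $Z_n$, and the only edge linking $X_n$ to $Y_n$ across the copies is $e_n$, this decomposition is a bijection onto the spanning subgraphs of $G_n$, and the four combinations $(G_{i,n-1},G_{j,n-1})$ with $(i,j)\in\{1,2\}^2$ partition the source.

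First I would control how rank and nullity behave under the one-point join at $Z_n$. Identifying one vertex of $H_{n-1}^a$ with one vertex of $H_{n-1}^b$ reduces the total number of components by exactly one, so a direct count gives $r(H_{n-1}^a*H_{n-1}^b)=r(H_{n-1}^a)+r(H_{n-1}^b)$ and $n(H_{n-1}^a*H_{n-1}^b)=n(H_{n-1}^a)+n(H_{n-1}^b)$. Combined with $r(G_n)=2\,r(G_{n-1})$, which follows from~(\ref{b1}), the exponent in~(\ref{a1}) splits multiplicatively, so the contribution of $H_{n-1}^a*H_{n-1}^b$ to $T_n$ factors as the contribution of $H_{n-1}^a$ to $T_{i,n-1}$ times that of $H_{n-1}^b$ to $T_{j,n-1}$. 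Summing over the four combinations therefore produces exactly the products $T_{i,n-1}T_{j,n-1}$ appearing in the statement; this is the computational core of Property~\ref{P1}.

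The heart of the argument is the connectivity bookkeeping encoded in Table~\ref{T1}. With $C=\emptyset$, any path from $X_n$ to $Y_n$ must pass through $Z_n$, so the two special vertices lie in a common component precisely when $H_{n-1}^a\in G_{1,n-1}$ \emph{and} $H_{n-1}^b\in G_{1,n-1}$; this places $(G_{1,n-1},G_{1,n-1})$ into $G_{1,n}$ and the remaining three combinations into $G_{2,n}$, each with contribution factor $1$. With $C=\{e_n\}$, the edge joins $X_n$ to $Y_n$ directly, so every resulting subgraph lies in $G_{1,n}$; here I would split according to whether $e_n$ closes a cycle or merges two components. When $H_{n-1}^a,H_{n-1}^b\in G_{1,n-1}$ the endpoints of $e_n$ already lie in one component, so $e_n$ raises the nullity by one and scales the contribution by $(y-1)$; in the other three cases $e_n$ merges two components, lowering $r(G_n)-r(H_n)$ by one and scaling the contribution by $(x-1)^{-1}$.

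Collecting the eight contributions into the two classes then gives $T_{1,n}$ as the sum of the $\{e_n\}$- and $\emptyset$-versions of $(G_{1,n-1},G_{1,n-1})$, namely $(y-1)T_{1,n-1}^2+T_{1,n-1}^2$, together with $\frac{2}{x-1}T_{1,n-1}T_{2,n-1}+\frac{1}{x-1}T_{2,n-1}^2$ from the edge-carrying combinations meeting $G_{2,n-1}$; the identity $y=(y-1)+1$ yields the stated form. Likewise $T_{2,n}$ collects the three edgeless mixed combinations as $T_{2,n-1}^2+2T_{1,n-1}T_{2,n-1}$. The initial values follow by inspecting $G_0$: its edgeless spanning subgraph separates the two special vertices and contributes $(x-1)$ to $T_{2,0}$, while the single-edge subgraph joins them and contributes $1$ to $T_{1,0}$. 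The only genuinely delicate point is the connectivity analysis of $e_n$: one must argue that the sole link between the two copies (apart from $e_n$) is $Z_n$, so that $X_n$ and $Y_n$ are co-connected in $H_{n-1}^a*H_{n-1}^b$ exactly under the stated condition. Everything else is additive bookkeeping of ranks and nullities.
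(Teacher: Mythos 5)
Your proposal is correct and takes essentially the same approach as the paper: the paper's proof consists precisely of the eight-case analysis of the pairs $(H_{n-1}^a*H_{n-1}^b,\,C)$ recorded in Table~\ref{T1}, which you reproduce and justify case by case. Your explicit rank/nullity bookkeeping under the one-point join at $Z_n$, and the observation that $e_n$ contributes a factor $(y-1)$ when it closes a cycle and $(x-1)^{-1}$ when it merges components, merely spell out what the paper delegates to the table.
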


\begin{proof}
The initial conditions are easily verified. The strategy of the proof is to study all the possible combinations of $H_{n-1}^a*H_{n-1}^b$ and $C$, and analyze which kind of contribution they give to $T_{1,n}(x,y)$ and  $T_{2,n}(x,y)$.

As shown in table~\ref{T1}, we have,
\begin{eqnarray*}
T_{1,n}(x,y)&=& \sum_{H_n \in G_{1,n}}(x-1)^{r(G_n)-r(H)}(y-1)^{n(H)} \\
            &=& yT_{1,n-1}^2 +\frac{2}{x-1}T_{1,n-1}T_{2,n-1}+\frac{1}{x-1}T_{2,n-1}^2,\\
T_{2,n}(x,y)&=& \sum_{H_n \in G_{2,n}}(x-1)^{r(G_n)-r(H)}(y-1)^{n(H)} \\
            &=& 2T_{1,n-1}T_{2,n-1}+T_{2,n-1}^2.
\end{eqnarray*}
\end{proof}

It can be  proven by induction that $x-1$ divides $T_{2,n}$ for each $n \geqslant 0 $. As a consequence, we can write $T_{2,n}(x,y)=(x-1)N_n(x,y)$, with $N_n(x,y) \in \mathbb{Z}[x,y]$. Then we have the following corollary which will be very useful in the following derivation of special evaluations of the Tutte polynomial $T_n(x,y)$.
\begin{cor}\label{C1}
For each $n\geqslant 1$, the Tutte polynomial $T_n(x,y)$ of $G_n$
is given by
$$
T_n(x,y)=T_{1,n}(x,y)+(x-1)N_n(x,y),
$$
where the polynomials $T_{1,n}(x,y)$, $N_n(x,y)$ satisfy the following recursive
relations:
\begin{eqnarray*}
T_{1,n}(x,y)&=& yT_{1,n-1}^2 +2T_{1,n-1}N_{n-1}+(x-1)N_{n-1}^2,
\end{eqnarray*}
\begin{eqnarray*}
N_{n}(x,y)=2T_{1,n-1}N_{n-1}+(x-1)N_{n-1}^2
\end{eqnarray*}
with initial conditions
$$T_{1,0}(x,y)=1, \quad N_0(x,y)=1.$$
\end{cor}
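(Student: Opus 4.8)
The plan is to deduce the Corollary directly from Theorem~\ref{th:1}, first establishing the divisibility claim by induction and then carrying out a routine substitution. To begin, I would prove that $x-1$ divides $T_{2,n}$ for every $n\geqslant 0$ by induction on $n$, simultaneously defining the quotient $N_n=T_{2,n}/(x-1)$ and checking that it lies in $\mathbb{Z}[x,y]$. The base case is immediate: the initial condition $T_{2,0}(x,y)=x-1$ forces $N_0(x,y)=1$, which agrees with the stated initial value.

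For the inductive step, suppose $T_{2,n-1}=(x-1)N_{n-1}$ with $N_{n-1}\in\mathbb{Z}[x,y]$. Substituting this into the recursion $T_{2,n}=T_{2,n-1}^2+2T_{1,n-1}T_{2,n-1}$ supplied by Theorem~\ref{th:1} and extracting the common factor gives
\begin{align*}
T_{2,n}&=(x-1)^2N_{n-1}^2+2(x-1)T_{1,n-1}N_{n-1}\\
&=(x-1)\bigl[(x-1)N_{n-1}^2+2T_{1,n-1}N_{n-1}\bigr].
\end{align*}
Hence $x-1$ divides $T_{2,n}$, and the quotient is precisely $N_n=(x-1)N_{n-1}^2+2T_{1,n-1}N_{n-1}$, which is the asserted recursion for $N_n$.

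Next I would obtain the recursion for $T_{1,n}$ by the same replacement. Inserting $T_{2,n-1}=(x-1)N_{n-1}$ into the first recursion of Theorem~\ref{th:1} yields
\begin{align*}
T_{1,n}&=yT_{1,n-1}^2+\frac{2}{x-1}T_{1,n-1}(x-1)N_{n-1}+\frac{1}{x-1}(x-1)^2N_{n-1}^2\\
&=yT_{1,n-1}^2+2T_{1,n-1}N_{n-1}+(x-1)N_{n-1}^2,
\end{align*}
where each factor of $x-1$ in a denominator cancels against a factor introduced by the substitution. Since $T_{1,n}$ and $N_n$ are now written as polynomial expressions in $T_{1,n-1}$ and $N_{n-1}$, a simultaneous induction confirms that both stay in $\mathbb{Z}[x,y]$. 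I do not anticipate any real obstacle: the only delicate point is that the divisions by $x-1$ must be exact at every stage, but this is automatic once the divisibility of $T_{2,n}$ is in place, since every occurrence of $T_{2,n-1}$ already carries its own factor of $x-1$.
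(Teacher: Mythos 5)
Your proposal is correct and takes essentially the same route as the paper, which likewise obtains the corollary by proving $x-1 \mid T_{2,n}$ by induction and substituting $T_{2,n}=(x-1)N_n$ into the recursions of Theorem~\ref{th:1}. Your write-up simply makes explicit the exact cancellation of the $\frac{1}{x-1}$ factors that the paper leaves to the reader.
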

\section{Simple applications}
The problem of spanning trees is closely related to various aspects of networks, such as dimer coverings \cite{Tseng03} and random walks \cite{Dhar97,Noh04}. Thus it is of great interest to determine the exact number of spanning trees $N_{ST}(G)$ \cite{Zhang12,Lin11,Zhang10a}.  Two well-known methods for computing $N_{ST}(G)$ are as follows: (i) via the Laplacian matrix \cite{Biggs93} and (ii) as a special value of the Tutte polynomial.  With these obtained results, we go on to compute $N_{ST}(G_n)$ using the second method. First, according to Corollary~\ref{C1}, let $x=1$, we have $T_n(1,y)=T_{1,n}(1,y)$, and
\begin{eqnarray}
T_{1,n}(1,y) &=& yT_{1,n-1}^2+2T_{1,n-1}N_{n-1},\label{d1}\\
N_n(1,y)&=&2T_{1,n-1}N_{n-1}.\label{d2}
\end{eqnarray}
Eqs.~(\ref{d1}) and~(\ref{d2}) together yield a useful relation given by
\begin{equation}\label{d3}
\frac{T_{n+1}(1,y)}{T_n^2(1,y)}=(2+y)-\frac{2yT_{n-1}^2}{T_n}.
\end{equation}
For $t \geqslant 2$, we introduce
 \begin{equation}\label{d4}
    a_t(1,y)=\frac{T_t(1,y)}{T_{t-1}(1,y)^2}, \ \hbox{with} \  a_1(1,y)=2+y.
    \end{equation}
Thus, eq.~(\ref{d3}) can be further written as
\begin{equation}\label{d5}
a_{n+1}(1,y)=(2+y)-\frac{2y}{a_n(1,y)}.
\end{equation}

If $y=2$, eq.~(\ref{d5}) leads to
\begin{equation}\label{d6}
\frac{1}{a_{n+1}-2}=\frac{1}{a_{n}-2}+\frac{1}{2}.
\end{equation}
Since $a_1(1,2)=4$, we have
\begin{equation}\label{d7}
a_n(1,2)=\frac{2(n+1)}{n}.
\end{equation}
Substituting the above obtained expression of $a_n(1,2)$ into eq.~(\ref{d4}) and using the initial condition $T_0(1,2)=1$ yields
\begin{equation}\label{d8}
      T_n (1,2)  =\frac{2(n+1)}{n}T_{n-1}^2 =2^{2^n-1}(n+1)\prod_{i=2}^ni^{2^{n-i}}.
     \end{equation}
It is well known that $N_{CSSG}(G_n)=T_n(1,2)$. Therefore, we obtain explicit expression of the number of connected spanning subgraphs of the Farey graph $G_n$.

If $y \neq 2$, eq.~(\ref{d5}) can be solved to obtain
\begin{equation}\label{d9}
  a_n(1,y)=\frac{2^{n+1}-y^{n+1}}{2^n-y^n}
  \end{equation}
Using the initial condition $T_0(1,y)=1$ and the expression for $a_n(1,y)$ provided by eq.~(\ref{d9}), eq.~(\ref{d4}) can be solved inductively to obtain
\begin{eqnarray}\label{d10}
      T_n (1,y)  &=& \frac{2^{n+1}-y^{n+1}}{2^n-y^n}T_{n-1}^2 \nonumber\\
            &=&\frac{2^{n+1}-y^{n+1}}{(2-y)^{2^{n-1}}}\prod_{i=2}^n(2^i-y^i)^{2^{n-i}}.
     \end{eqnarray}
     Notice that, the eq.(\ref{d10})
     is also right when $y=2$, since
     \begin{equation*}
     2^i-y^i=(2-y)(2^{i-1}+ 2^{i-2}y+\dots+2^{i-k-1}y^k+\dots+y^{i-1})
     \end{equation*}
      and
      \begin{equation*}
    (2-y)\prod_{i=2}^n(2-y)^{2^{n-i}}=(2-y)^{2^{n-1}}.
     \end{equation*}

     Let $y=1$, eq.~(\ref{d10}) leads to
     \begin{equation}\label{d11}
     N_{ST}(G_n)=T_n(1,1)=(2^{n+1}-1)\prod_{i=2}^n(2^i-1)^{2^{n-i}}.
     \end{equation}
      Thus far, we have derived the exact number of spanning trees of the Farey graph $G_n$. Note that this specialization of $T_n(x,y)$ returns the equation obtained in Ref.~\cite{Zhang12}, where the authors use the Laplacian matrix.

     In general case, the calculation of the reliability polynomial is NP-hard \cite{Ball80}. But for the Farey graph,  inserting eqs.~(\ref{d10}) and ~(\ref{b1}) into eq.~(\ref{a3}), we obtain the explicit expression for the the reliability polynomial $R(G_n;p)$ as
     \begin{equation}\label{d12}
     R(G_n;p)=q^{2^n-1}p^{2^n}\frac{2^{n+1}-q^{-n-1}}{(2-q^{-1})^{2^{n-1}}}\prod_{i=2}^n(2^i-q^{-i})^{2^{n-i}},
     \end{equation}
     where $q=1-p$.

     Finally, we will determine the chromatic polynomial $P(G_n;\lambda)$. Since $P(G_n;\lambda)$ satisfies the following relation:
     \begin{eqnarray}\label{d13}
     P(G_n;\lambda)&=&(-1)^{|V_n|-k(G_n)}\lambda^{k(G_n)}T_n(1-\lambda,0) \nonumber\\
                   &=& \lambda T_n(1-\lambda,0),
     \end{eqnarray}
    we turn our aim to finding $T_n(x,0)$. From Corollary~\ref{C1}, let $y=0$, we can easily get the following recursive equations,
    \begin{eqnarray}\label{d14}
    T_n(x,0)&=&T_{1,n}(x,0)+(x-1)N_n(x,0), \label{d14}\\
    T_{1,n}(x,0)&=&2T_{1,n-1}N_{n-1}+ (x-1)N_{n-1}^2, \label{d15}\\
    N_n(x,0)&=&2T_{1,n-1}N_{n-1}+(x-1)N_{n-1}^2.\label{d16}
    \end{eqnarray}
    It follows from eqs.~(\ref{d15}) and~(\ref{d16}) that
    \begin{equation}\label{d17}
    T_{1,n}(x,0)=N_n(x,0)
    \end{equation}
    Substituting eq.~(\ref{d17}) into eq.~(\ref{d16}), we obtain
    \begin{equation}\label{d18}
    N_n(x,0)=(x+1)N_{n-1}^2(x,0).
    \end{equation}
    Considering the initial condition $N_0(x,0)=1$, eq.~(\ref{d14}) is solved to yield
    \begin{equation}\label{d19}
    T_n(x,0)=xN_n(x,0)=x(x+1)^{2^n-1}
    \end{equation}
    Plugging the last expression into eq.~(\ref{d13}), we arrive at the explicit formula for the chromatic polynomial,
    \begin{equation}\label{d20}
    P(G_n;\lambda)=\lambda(1-\lambda)(2-\lambda)^{2^n-1}.
    \end{equation}
    Then, it is easy to see that $G_n$ is minimally 3-colorable.
\section{conclusions}
In this paper, we find recursive formulas for the Tutte polynomial of the Farey graph family by using a method, based on their self-similar structure. We also evaluate special cases of these results to compute the corresponding reliability polynomial, chromatic polynomial, the number of spanning trees and the number of connected spanning subgraphs.
\begin{acknowledgments}
The authors  are supported by the National Natural Science Foundation of China Under grant No. 1171102
\end{acknowledgments}

\appendixpage

\section{Tutte polynomial of other self-similar networks}
The computation of the Tutte polynomial of a general graph is NP-hard. Indeed, even evaluating $T(G;x,y)$ for specific values $(x,y)$ is \#P-complete, except for a few special points and a special hyperbola \cite{Jaeger90}. However we gain explicit expressions of the Tutte polynomial of some other self-similar networks with different degree distributions, exponential or scale-free.

\subsection{Koch network}

We first study the self-similar Koch network family \cite{Bin12,Zhang10}. The Koch networks are derived from the Koch fractals \cite{Schneider65} and can be constructed recursively. Let $K_{m,n}$ ($m$ is a natural number) be the Koch network in generation $n$. Then, the family of Koch networks can be generated following a recursive-modular method.
For $n=0$, $K_{m,0}$ consists of a triangle with three vertices labeled respectively by $X$, $Y$ and $Z$, called hub vertices, which have the highest degree among all vertices in the networks.
For $n \geqslant 1$, $K_{m,n}$ is obtained from $3m+1$ copies of $K_{m,n-1}$ by joining them at the hub vertices. As shown in Fig.~\ref{F3}, the network $K_{m,n+1}$ may be obtained by the juxtaposition of $3m+1$ copies of $K_{m,n}$ which are labeled as $K_{m,n}^1,K_{m,n}^2,\dots,K_{m,n}^{3m}$ and $K_{m,n}^{3m+1}$, respectively.


\begin{figure}[htbp]
 \centering
 \includegraphics[width=0.4\textwidth]{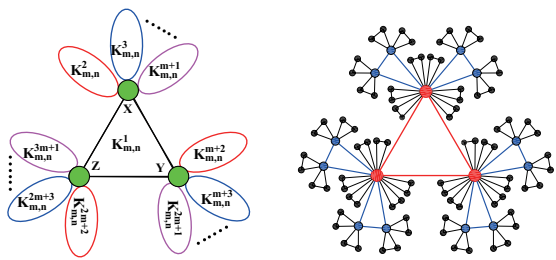}
 \caption{Recursive construction of Koch networks that highlights their
self-similarity. The right one is a particular Koch network $K_{2,2}$.}
\label{F3}
 \end{figure}
The number of vertices and edges of $K_{m,n}=(V_n,E_n)$ are, respectively, $|V_n|=2(3m+1)^n+1$ and $|E_n|=3(3m+1)^n$. The Koch networks present some typical properties of real-world networks. They are scale-free with vertex degree distribution $P(k)$ in a power-law form, $P(k) \thicksim k^{-\gamma}$, where $\gamma=1+\frac{ln(3m+1)}{ln(m+1)}$. They also have an obvious small-world characteristic. They average path length increases logarithmically with the network size and their clustering coefficient is very high.

We now begin to determine the Tutte polynomial $T(K_{m,n};x,y)$. According to the construction of the Koch network, $K_{m,n}$ can be obtained from $3m+1$ copies of $K_{m,n-1}$ through one-point join.
From Property~\ref{P1}, we have
 \begin{equation}\label{e1}
T(K_{m,n};x,y)=T(K_{m,n-1};x,y)^{3m+1}.
\end{equation}
Considering the initial condition $T(K_{m,0};x,y)=x^2+x+y$, we obtain explicit expression of the Tutte polynomial of the Koch network $K_{m,n}$ as,
\begin{equation}\label{e2}
T(K_{m,n};x,y)=(x^2+x+y)^{(3m+1)^n}.
\end{equation}
We next discuss some special valuations of $T(K_{m,n};x,y)$. It is easy to compute that
\begin{eqnarray}
N_{ST}(K_{m,n})    &=&T(K_{m,n};1,1)=3^{(3m+1)^n};\label{e3}\\
N_{CSSG}(K_{m,n})  &=&T(K_{m,n};1,2)=4^{(3m+1)^n};\label{e4}\\
N_{SF}(K_{m,n})    &=&T(K_{m,n};2,1)=7^{(3m+1)^n}.\label{e5}
\end{eqnarray}
Note that these quantities coincide with the result obtained in Refs.~\cite{Bin12,Zhang10} without using the Tutte polynomial.

\subsection{Small-world exponential network}
In this section we discuss a self-similar network which is observed from some real-life systems. This network is constructed iteratively \cite{Barriere09}, denoted by $S_{n}$ after $n \ (n \geqslant 0)$ iterations. The network starts from a triangle $S_{0}$, with three vertices called initial vertices. For $n \ge 1$, $S_{n}$ is obtained from $S_{n-1}$: for each existing vertex in $S_{n-1}$, two new vertices are generated, which and their mother vertex together form a new triangle, see Fig.~\ref{F4}. It is easy to derive that the number of vertices and edges of $S_{n}$ are, respectively, $|V_n|=3^{n+1}$ and $|E_n|=3(3^{n+1}-1)/2$. The resultant network has a degree distribution decaying exponentially with the degree. In addition, it is small-world with the average distance growing logarithmically with its size.

As shown in Fig.~\ref{F4}, $S_n$ can be obtained from three copies of $S_{n-1}$ and a triangle through one-point join. Since the Tutte polynomial of a triangle is equal to $x^2+x+y$, we have
\begin{equation}\label{e6}
T(S_n;x,y)=(x^2+x+y)T^3(S_{n-1};x,y).
\end{equation}
Additionally, it is easy to know that $T(S_0;x,y)=x^2+x+y$. We can therefore obtain  the exact expression of the Tutte polynomial of $S_n$,
\begin{equation}\label{e7}
T(S_n;x,y)=(x^2+x+y)^{(3^{n+1}-1)/2}.
\end{equation}
Based on the above equation, we can show some interesting quantities for the network, such as the number of spanning trees $N_{ST}(S_n)$, the number of connected spanning subgraphs $N_{CSSG}(S_n)$ and the number of spanning forest $N_{SF}(S_n)$,
\begin{figure}[htbp]
 \centering
 \includegraphics[width=0.4\textwidth]{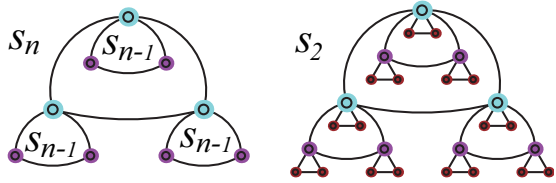}
\caption{Recursive construction of the small-world network family that highlights their
self-similarity. The right one is a particular network $S_2$.}
\label{F4}
\end{figure}
\begin{eqnarray}
N_{ST}(S_n)&=&3^{(3^{n+1}-1)/2};\label{e8}\\
N_{CSSG}(S_n)&=&4^{(3^{n+1}-1)/2};\label{e9}\\
N_{SF}(S_n)&=&7^{(3^{n+1}-1)/2}.\label{e10}
\end{eqnarray}
The same result about the number of spanning trees has been proven in Ref.~\cite{Lin11}, where the authors use the Laplacian matrix.

\acknowledgments

\end{document}